\theoremstyle{plain}
\newtheorem{thm}{Theorem}[section]
\newtheorem*{thm*}{Theorem}
\newtheorem{prop}{Proposition}[section]
\newtheorem*{prop*}{Proposition}
\newtheorem{cor}{Corollary}[section]
\newtheorem*{cor*}{Corollary}
\newtheorem*{lem*}{Lemma}
\theoremstyle{definition}
\newtheorem{defn}{Definition}[section]
\newtheorem*{defn*}{Definition}
\newtheorem*{exmp*}{Example}
\newtheorem*{exmps*}{Examples}
\newtheorem*{rem*}{Remark}
\newtheorem*{rems*}{Remarks}
\newtheorem*{note*}{Note}
\newcommand{\N}{{\mathbb N}}
\newcommand{\Z}{{\mathbb Z}}
\newcommand{\R}{{\mathbb R}}
\DeclareMathOperator{\Rep}{Re\,} \DeclareMathOperator{\Imp}{Im\,}
\DeclareMathOperator{\dist}{dist} \DeclareMathOperator{\spa}{span}
\begin{document}

\title[On the generation of ultradifferentiable $C_0$-semigroups]
    {On the generation of Beurling type Carleman ultradifferentiable
     $C_0$-semigroups by scalar type spectral operators}

\author{Marat V. Markin}
\address{Department of Mathematics,
California State University, Fresno 5245 N. Backer Avenue, M/S PB
108 Fresno, CA 93740-8001}
\email{mmarkin@csufresno.edu}

\subjclass{Primary 47B40, 47D03, 30D60; Secondary 34G10, 47B15.}
\date{04/06/2015}
\keywords{Scalar type spectral operator, $C_0$-semigroup
of linear operators, Carleman classes of functions and vectors.}

\begin{abstract}
A characterization of the scalar type spectral generators of
Beurling type Carleman ultradifferentiable $C_0$-semigroups is
established, the important case of the Gevrey
ultradifferentiability is considered in detail, the implementation
of the general criterion corresponding to a certain rapidly
growing defining sequence is observed.
\end{abstract}

\maketitle


\section[Introduction]{Introduction}

The problem of finding conditions necessary and sufficient for a densely defined closed linear operator $A$ in a complex
Banach space $X$ to be the generator of a $C_0$-semigroup $\left\{S(t)
|t\ge0\right\}$ with a certain regularity
property such as strong differentiability or analyticity of its orbits on $(0,\infty)$ and thus, of the
\textit{weak/mild solutions} of the associated abstract evolution equation
\begin{equation*}
y'(t)=Ay(t),\quad  t\ge 0,
\end{equation*}
\cite{Ball, Engel-Nagel} is central in qualitative theory.

The well known general generation criteria of analytic and
(infinite) differentiable $C_0$-semigro\-ups
\cite{Hille-Phillips,Pazy1968,Pazy,Yosida1958,Yosida} (cf. also
\cite{Engel-Nagel}) contain restrictions on the location of the
generator's \textit{spectrum} in the complex plane and on its
\textit{resolvent} behavior. As is shown in
\cite{Markin2002(2),Markin2004(1),Markin2008}, when the potential
generators are selected from the class of \textit{scalar type
spectral operators} (see Preliminaries), the restrictions of the
second kind can be dropped in the foregoing and other cases, which
makes the results more transparent, easier to handle, and
inherently qualitative.

The characterization of the scalar type spectral generators of Roumieu type Gevrey ultradifferentiable $C_0$-semigroups found in \cite{Markin2004(1)} is generalized in \cite{Markin2008} to the case of the Roumieu type Carleman ultradifferentiable $C_0$-semigroups. However, neither in  \cite{Markin2004(1)}, nor in \cite{Markin2008}, the case of Beurling type ultradifferentiability has been treated.

In the present paper, we are to establish a generation criterion of a Beurling type Carleman ultradifferentiable $C_0$-semigroup corresponding to a sequence of positive numbers $\left\{m_n \right\}_{n=0}^\infty$ by a scalar type spectral operator, consider in detail the important case of the Gevrey ultradifferentiability, and observe the implementation of the general criterion corresponding to a certain rapidly growing defining sequence.

\section[Preliminaries]{Preliminaries}

For the reader's convenience, we shall outline here certain  essential preliminaries.

\subsection{Scalar Type Spectral Operators}

Henceforth, unless specified otherwise, $A$ is supposed to be a {\it scalar type spectral operator} in a complex
Banach space $(X,\|\cdot\|)$ and $E_A(\cdot)$ to be its {\it spectral measure} (the {\it resolution of the identity}),
the operator's \textit{spectrum} $\sigma(A)$ being the {\it support} for the latter \cite{Survey58,Dun-SchIII}.

In a complex Hilbert space, the scalar type spectral operators are precisely those similar to the {\it normal} ones
\cite{Wermer}.

A scalar type spectral operator in complex Banach space has an {\it operational calculus} analogous to that of a \textit{normal operator} in a complex Hilbert space \cite{Survey58,Dun-SchII,Dun-SchIII}. To any Borel measurable function $F:{\mathbb C}\to {\mathbb C}$ (or $F:\sigma(A)\to {\mathbb C}$, ${\mathbb C}$ is the \textit{complex plane}), there corresponds a scalar type spectral operator
\begin{equation*}
F(A):=\int_{\mathbb C} F(\lambda)\,dE_A(\lambda)
=\int_{\sigma(A)} F(\lambda)\,dE_A(\lambda)
\end{equation*}
defined as follows:
\begin{equation*}
\begin{split}
F(A)f&:=\lim_{n\to\infty}F_n(A)f,\quad f\in D(F(A)),\\
D(F(A))&:=\left\{f\in X
\big| \lim_{n\to\infty}F_n(A)f\ \text{exists}\right\}
\end{split}
\end{equation*}
($D(\cdot)$ is the {\it domain} of an operator), where
\begin{equation*}
F_n(\cdot):=F(\cdot)\chi_{\{\lambda\in\sigma(A)\,|\,|F(\lambda)|\le
n\}}(\cdot), \quad  n\in\N,
\end{equation*}
($\chi_\delta(\cdot)$ is the {\it characteristic function} of a set $\delta\subseteq {\mathbb C}$, $\N:=\left\{1,2,3,\dots\right\}$ is the set of \textit{natural numbers}) and
\begin{equation*}
F_n(A):=\int_{\sigma(A)} F_n(\lambda)\,dE_A(\lambda),\quad
n\in\N,
\end{equation*}
are {\it bounded} scalar type spectral operators on $X$ defined in the same manner as for a {\it normal operator} (see, e.g., \cite{Dun-SchII,Plesner}).

In particular,
\begin{equation}\label{A}
A^n=\int_{{\mathbb C}} \lambda^n\,dE_A(\lambda)
=\int_{\sigma(A)} \lambda^n\,dE_A(\lambda),\quad n\in\Z_+,
\end{equation}
($\Z_+:=\left\{0,1,2,\dots\right\}$ is the set of \textit{nonnegative integers}).

If a scalar type spectral operator $A$ generates $C_0$-semigroup
of linear operators, it is of the form
\begin{equation*}
e^{tA}=\int_{\mathbb C} e^{t\lambda}\,dE_A(\lambda)
=\int_{\sigma(A)} e^{t\lambda}\,dE_A(\lambda),\quad t\ge 0
\end{equation*}
\cite{Berkson1966,Markin2002(2),Panchapagesan1969}.

The properties of the {\it spectral measure}  $E_A(\cdot)$
and the {\it operational calculus}, exhaustively delineated in \cite{Survey58,Dun-SchIII}, underly the entire subsequent discourse. Here, we shall outline a few facts of particular importance.

Due to its {\it strong countable additivity}, the spectral measure $E_A(\cdot)$ is {\it bounded} \cite{Dun-SchI,Dun-SchIII}, i.e., there is such an $M>0$ that, for any Borel set $\delta\subseteq {\mathbb C}$,
\begin{equation}\label{bounded}
\|E_A(\delta)\|\le M.
\end{equation}
The notation $\|\cdot\|$ has been recycled here to designate the norm in the space $L(X)$ of all bounded linear operators on $X$. We shall adhere to this rather common economy of symbols in what follows adopting the same notation for the norm in the \textit{dual space} $X^*$ as well.

For any $f\in X$ and $g^*\in X^*$, the \textit{total variation} $v(f,g^*,\cdot)$ of the complex-valued Borel measure $\langle E_A(\cdot)f,g^* \rangle$
($\langle \cdot,\cdot \rangle$ is the {\it pairing} between the space $X$ and its dual $X^*$) is a {\it finite} positive Borel measure with
\begin{equation}\label{tv}
v(f,g^*,{\mathbb C})=v(f,g^*,\sigma(A))\le 4M\|f\|\|g^*\|
\end{equation}
(see, e.g., \cite{Markin2004(1),Markin2004(2)}).
Also (Ibid.), $F:{\mathbb C}\to {\mathbb C}$ (or $F:\sigma(A)\to {\mathbb C}$) being an arbitrary Borel measurable function, for any $f\in D(F(A))$, $g^*\in X^*$, and an arbitrary Borel set $\sigma\subseteq {\mathbb C}$,
\begin{equation}\label{cond(ii)}
\int_\sigma|F(\lambda)|\,dv(f,g^*,\lambda)
\le 4M\|E_A(\sigma)F(A)f\|\|g^*\|.
\end{equation}
In particular,
\begin{equation}\label{cond(i)}
\int_{{\mathbb C}}|F(\lambda)|\,d v(f,g^*,\lambda)
=\int_{\sigma(A)}|F(\lambda)|\,d v(f,g^*,\lambda)\le 4M\|F(A)f\|\|g^*\|.
\end{equation}
The constant $M>0$ in \eqref{tv}--\eqref{cond(i)} is from
\eqref{bounded}.

Subsequently, the frequent terms {\it "spectral measure"} and {\it "operational calculus"} will be abbreviated to {\it s.m.} and {\it o.c.}, respectively.

\subsection{The Carleman Classes of Functions}

Let $I$ be an interval of the real axis $\R$, $C^\infty(I,X)$ be the space of all $X$-valued functions
strongly infinite differentiable on $I$,
and $\left\{m_n\right\}_{n=0}^\infty$ be a sequence of positive numbers.

The subspaces of $C^\infty(I,X)$
\begin{multline*}
C_{\{m_n\}}(I, X):=\bigl\{g(\cdot)\in C^{\infty}(I, X) \bigm |
\forall [a,b] \subseteq I \ \exists \alpha>0\ \exists c>0:
\\
\hfill
\max_{a \le t \le b}\|g^{(n)}(t)\| \le c\alpha^nm_n,
\ n\in\Z_+\bigr\},
\\
\shoveleft{
C_{(m_n)}(I,X):=\bigl\{g(\cdot) \in C^{\infty}(I,X) \bigm |
\forall [a,b] \subseteq I\ \forall \alpha > 0 \ \exists c>0:
}\\
\hfill
\max_{a \le t \le b}\|g^{(n)}(t)\| \le c\alpha^nm_n,\ n\in\Z_+\bigr\}
\end{multline*}
are called the {\it Carleman classes} of strongly ultradifferentiable on $I$ vector functions corresponding to the sequence $\left\{m_n\right\}_{n=0}^\infty$
of {\it Roumieu} and {\it Beurling type}, respectively
(for scalar functions, see \cite{Carleman,Komatsu,Mandel}).

The inclusions
\begin{equation}\label{incl1}
C_{(m_n)}(I,X)\subseteq C_{\{m_n\}}(I,X)\subseteq C^{\infty}(I,X)
\end{equation}
are obvious.

If two sequences of positive numbers $\bigl\{m_n \bigr\}_{n=0}^\infty$ and $\bigl\{m'_n \bigr\}_{n=0}^\infty$ are related  as follows:
\begin{equation*}
\forall \gamma > 0 \ \exists c=c(\gamma)>0:\ m'_n \le c\gamma^n
m_n,\quad  n\in\Z_+,
\end{equation*}
we also have the inclusion
\begin{equation}\label{incl2}
C_{\{m'_n\}}(I,X) \subseteq C_{(m_n)}(I,X),
\end{equation}
the sequences being subject to the condition
\begin{equation*}
\exists \gamma_1,\gamma_2 > 0, \ \exists c_1,c_2>0:\ c_1\gamma_1^n
m_n\le m'_n \le c_2\gamma_2^n m_n,\quad n\in\Z_+,
\end{equation*}
their corresponding Carleman classes coincide:
\begin{equation}\label{equal}
C_{\{m_n\}}(I,X)=C_{\{m'_n\}}(I,X),\quad
C_{(m_n)}(I,X)=C_{(m'_n)}(I,X).
\end{equation}
Considering {\it Stirling's formula} and the latter,
\begin{equation*}
\begin{split}
&{\mathcal E}^{\{\beta\}}(I,X):=C_{\{[n!]^\beta\}}(I,X)
=C_{\{n^{\beta n}\}}(I,X),\\
&{\mathcal E}^{(\beta)}(I,X):=C_{([n!]^\beta)}(I,X)
=C_{(n^{\beta n})}(I,X)
\end{split}
\end{equation*}
with $\beta\ge 0$ are the well-known \textit{Gevrey classes} of strongly ultradifferentiable on $I$ vector functions of order $\beta$ of Roumieu and Beurling type, respectively (for scalar functions, see \cite{Gevrey}). In particular, ${\mathcal E}^{\{1\}}(I,X)$ and ${\mathcal E}^{(1)}(I,X)$ are the classes of {\it analytic} on $I$ and {\it entire} vector functions, respectively; ${\mathcal E}^{\{0\}}(I,X)$ and ${\mathcal E}^{(0)}(I,X)$ (i.e., the classes $C_{\{1\}}(I,X)$ and $C_{(1)}(I,X)$ corresponding to the sequence $m_n\equiv 1$) are the classes of \textit{entire} vector functions of \textit{exponential} and \textit{minimal exponential type}, respectively.

\subsection{The Carleman Classes of Vectors}

Let $A$ be a densely defined closed linear operator in a complex Banach space $(X,\|\cdot\|)$ and $\left\{m_n\right\}_{n=0}^\infty$ be a sequence of positive numbers and
\begin{equation*}
C^{\infty}(A):=\bigcap_{n=0}^{\infty}D(A^n).
\end{equation*}

The subspaces of $C^{\infty}(A)$
\begin{equation*}
\begin{split}
C_{\{m_n\}}(A)&:=\left\{f\in C^{\infty}(A) \big | \exists
\alpha>0\ \exists c>0: \|A^nf\| \le c\alpha^n m_n,\ n\in\Z_+
\right\},\\ C_{(m_n)}(A)&:=\left\{f \in C^{\infty}(A) \big  |
\forall \alpha > 0 \ \exists c>0: \|A^nf\| \le c\alpha^n m_n,\
n\in\Z_+ \right\}
\end{split}
\end{equation*}
are called the {\it Carleman classes} of ultradifferentiable
vectors of the operator $A$ corres\-ponding to the sequence
$\left\{m_n\right\}_{n=0}^\infty$ of {\it Roumieu} and {\it
Beurling type}, respectively.

For the Carleman classes of vectors, the inclusions analogous to \eqref{incl1} and \eqref{incl2} and the equalities analogous to \eqref{equal} are valid.

For $\beta\ge 0$,
\begin{equation*}
\begin{split}
&{\mathcal E}^{\{\beta\}}(A):=C_{\{[n!]^\beta\}}(A)
=C_{\{n^{\beta n}\}}(A),\\
&{\mathcal E}^{(\beta)}(A):=C_{([n!]^\beta)}(A)
=C_{(n^{\beta n})}(A)
\end{split}
\end{equation*}
are the well-known \textit{Gevrey classes} of strongly ultradifferentiable vectors of $A$ of order $\beta$ of Roumieu
and Beurling type, respectively (see, e.g., \cite{GorV83,book,Gor-Knyaz}). In particular,
${\mathcal E}^{\{1\}}(A)$ and ${\mathcal E}^{(1)}(A)$ are the well-known classes of {\it analytic} and
{\it entire} vectors of $A$, respectively \cite{Goodman,Nelson}; ${\mathcal E}^{\{0\}}(A)$ and
${\mathcal E}^{(0)}(A)$ (i.e., the classes $C_{\{1\}}(A)$ and $C_{(1)}(A)$ corresponding to the sequence
$m_n\equiv 1$) are the classes of \textit{entire} vectors of \textit{exponential} and \textit{minimal exponential
type}, respectively (see, e.g., \cite{Gor-Knyaz,Radyno1983(1)}).

\subsection{Conditions on the Sequence $\left\{m_n
\right\}_{n=0}^\infty$}

If a sequence of positive numbers \break
$\left\{m_n\right\}_{n=0}^\infty$ satisfies the condition
\begin{equation*}
\textbf{(WGR)}\ \forall \alpha>0\ \exists c=c(\alpha)>0:\
c\alpha^n \le m_n,\quad n\in\Z_+,
\end{equation*}
the scalar function
\begin{equation}\label{T}
T(\lambda):=m_0\sum_{n=0}^{\infty} \frac{\lambda^n}{m_n},\quad
\lambda\ge 0 \quad (0^0:=1)
\end{equation}
first introduced by S. Mandelbrojt \cite{Mandel}, is well-defined (cf. \cite{Gor-Knyaz}).
The function is {\it con\-tinuous}, {\it strictly increasing}, and $T(0)=1$.

Hence, the function
\begin{equation}\label{M}
M(\lambda):=\ln T(\lambda),\quad \lambda\ge 0,
\end{equation}
is {\it continuous}, {\it strictly increasing} and $M(0)=0$. Its {\it inverse} $M^{-1}(\cdot)$ is defined on $[0,\infty)$ and inherits all the aforementioned properties of $M(\cdot)$.

As is shown in \cite{GorV83} (see also \cite{Gor-Knyaz} and \cite{book}), the sequence $\left\{m_n\right\}_{n=0}^\infty$ satisfying the condition \textbf{(WGR)}, for a {\it normal operator} $A$ in a complex Hilbert space $X$, the equalities
\begin{equation}\label{CC}
\begin{split}
&C_{\{m_n\}}(A)=\bigcup_{t>0}D(T(t|A|)),\\
&C_{(m_n)}(A)=\bigcap_{t>0}D(T(t|A|))\\
\end{split}
\end{equation}
are true, the operators $T(t|A|)$, $t>0$, defined in the sense of the operational calculus for a normal operator (see, e.g., \cite{Dun-SchII,Plesner}) and the function $T(\cdot)$ being replaceable with any {\it nonnegative}, {\it continuous}, and {\it increasing} on $[0,\infty)$ function $F(\cdot)$ satisfying
\begin{equation}\label{replace}
c_1F(\gamma_1\lambda)\le T(\lambda)\le c_2F(\gamma_2\lambda),\quad
\lambda\ge R,
\end{equation}
with some $\gamma_1,\gamma_2,c_1,c_2>0$ and $R\ge 0$, in particular, with
\begin{equation*}
S(\lambda):=m_0\sup_{n\ge 0}\dfrac{\lambda^n}{m_n}, \quad
\lambda\ge 0, \quad \text{or}\quad
P(\lambda):=m_0\biggl[\sum_{n=0}^\infty\dfrac{\lambda^{2n}}{m_n^2}\biggr]^{1/2},
\quad \lambda\ge 0,
\end{equation*}
(cf. \cite{Gor-Knyaz}).

In {\cite[Theorem 3.1]{Markin2004(2)}}, the above is generalized to the case of a \textit{scalar type spectral operator} $A$ in a \textit{reflexive} complex Banach space $X$, the reflexivity requirement shown to be superfluous in {\cite[Theorem 3.1]{Markin2015(1)}}.

In \cite{Markin2008}, the sequence $\left\{m_n\right\}_{n=0}^\infty$ is subject to the following conditions:
\begin{equation*}
\textbf{(GR)}\ \exists \alpha>0\ \exists c>0:\ c\alpha^nn! \le
m_n,\quad n\in\Z_+,
\end{equation*}
and
\begin{equation*}
\textbf{(SBC)}\ \exists h,H>1\ \exists l,L>0:\ lh^n\le
\sum_{k=0}^n \frac{m_n}{m_km_{n-k}}\le LH^n,\quad n\in\Z_+.
\end{equation*}

The former is a stronger version of {\bf (WGR)}, both {\bf (WGR)}
and {\bf (GR)} being restrictions on the \textit{growth} of
$\left\{m_n\right\}_{n=0}^\infty$, which explains the names. The
latter resembles the fundamental property of the {\it binomial
coefficients}
\begin{equation*}
\sum_{k=0}^n\binom{n}{k}=2^n,\quad  n\in\Z_+,
\end{equation*}
which also explains the name, and is precisely arrived at for $m_n=n!$.

Both {\bf (GR)} and {\bf (SBC)} are satisfied for
$m_n=[n!]^\beta$ with $\beta\ge 1$ (see \cite{Markin2008} for details).

Here, the sequence $\left\{m_n\right\}_{n=0}^\infty$ will be
subject to a stronger version of {\bf (GR)}
\begin{equation*}
\textbf{(SGR)}\ \forall \alpha>0\ \exists c=c(\alpha)>0:\
c\alpha^nn! \le m_n,\quad n\in\Z_+,
\end{equation*}
and a weaker version of {\bf (SBC)}
\begin{equation*}
\textbf{(BC)}\ \exists h>1\ \exists l>0:\ lh^n\le \sum_{k=0}^n
\frac{m_n}{m_km_{n-k}},\quad n\in\Z_+.
\end{equation*}

Both {\bf (SGR)} and {\bf (BC)} are satisfied for
$m_n=[n!]^\beta$ with $\beta>1$, also for $m_n=e^{n^2}$ (see \cite{Markin2009} for details).

Observe that there are examples demonstrating the
independence of the conditions {\bf (GR)} and {\bf (BC)} \cite{Markin2009} (cf. \cite{Markin2008}).

As is shown in \cite{Markin2008}, the conditions {\bf (GR)} and {\bf (SBC)} have the following implications for the function $M(\cdot)$ defined in \eqref{M} and its inverse $M^{-1}(\cdot)$:
\begin{equation*}\label{GR}
\exists \alpha>0\ \exists R>0:\
2\alpha^{-1}M^{-1}(\lambda)\ge \lambda,\quad \lambda\ge M(R),
\end{equation*}
and
\begin{equation*}\label{SBC}
\begin{gathered}
\exists h,H>1\ \exists l,L>0\ \text{(the constants from the condition \textbf{(SBC)})}:\\
2^{-n}M(h^n\lambda)+[1-2^{-n}]\ln(m_0l)\le  M(\lambda)
\le 2^{-n}M(H^n\lambda)+[1-2^{-n}]\ln(m_0L),\\
n\in\N,\quad \lambda\ge 0.
\end{gathered}
\end{equation*}

Observe that, from \textbf{(SBC)} with $n=0$, the estimates
\begin{equation*}
\ln(m_0l)\le 0\le \ln(m_0L)
\end{equation*}
are inferred immediately.

The conditions {\bf (SGR)} and {\bf (BC)} imply
\begin{equation}\label{SGR}
\forall \alpha>0\ \exists R=R(\alpha)>0:\
2\alpha^{-1}M^{-1}(\lambda)\ge \lambda,\quad \lambda\ge M(R),
\end{equation}
and
\begin{equation}\label{BC1}
\begin{gathered}
\exists h>1\ \exists l>0\ \text{(the constants of the condition \textbf{(BC)})}:\\
2^{-n}M(h^n\lambda)+[1-2^{-n}]\ln(m_0l)\le  M(\lambda),\quad n\in\N,\quad \lambda\ge 0
\end{gathered}
\end{equation}
(see \cite{Markin2009} for details).

Substituting $h^{-n}\lambda$ for $\lambda$, we obtain the following equivalent version:
\begin{equation}\label{BC2}
\begin{split}
& \exists h>1\ \exists l>0\ \text{(the constants from the condition \textbf{(BC)})}:\\
& M(\lambda)\le 2^{n}M(h^{-n}\lambda)-[2^{n}-1]\ln(m_0l),\quad \lambda\ge 0,\quad n\in\N.
\end{split}
\end{equation}

\section{Beurling type Carleman ultradifferentiable $C_0$-semigroups}

\begin{defn}\label{def1}
Let $\left\{m_n\right\}_{n=0}^{\infty}$ be a sequence of positive numbers. We shall call a $C_0$-semigroup
$\left\{S(t)\big|t\ge0\right\}$ in a complex Banach space $(X,\|\cdot\|)$ a Roumieu (Beurling)
type Carleman ultradifferentiable $C_0$-semigroup corresponding to the sequence $\left\{m_n\right\}_{n=0}^{\infty}$,
or a $C_{\{m_n\}}$-semigroup (a $C_{(m_n)}$-semigroup), if each orbit $S(\cdot)f$, $f\in X$, belongs
to the Roumieu (Beurling) type Carleman class of vector functions
\begin{equation*}
C_{\{m_n\}}\left((0,\infty),X\right)\quad
(C_{(m_n)}\left((0,\infty),X\right),\ \text{respectively})
\end{equation*}
(cf. \cite{Markin2008}).
\end{defn}

Recall that in \cite{Markin2008}, we have proved the following statements.

\begin{prop}
{\rm(}\cite[Proposition $4.1$]{Markin2008}{\rm)}.
\label{prop}
Let $A$ be a scalar type spectral operator in a complex Banach space $(X,\|\cdot\|)$ generating a
$C_0$-semigroup $\left\{e^{tA}\big|t\ge 0\right\}$ and $\left\{m_n\right\}_{n=0}^\infty$
be a sequence of positive numbers. Then the restriction of an orbit $e^{tA}f$, $t\ge 0$, $f\in X$,
to a subinterval $I\subseteq [0,\infty)$ belongs to the
Carleman class
$C_{\{m_n\}}(I,X)$ ($C_{(m_n)}(I,X)$) iff
\begin{equation*}
e^{tA}f \in C_{\{m_n\}}(A)
\ \text{($C_{(m_n)}(A)$, respectively)},\quad t\in I.
\end{equation*}
\end{prop}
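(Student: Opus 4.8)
The plan is to reduce the statement about ultradifferentiability of the orbit to a statement about membership in the Carleman class of vectors, by exploiting the explicit operational-calculus form of the semigroup, namely $e^{tA}=\int_{\sigma(A)}e^{t\lambda}\,dE_A(\lambda)$, and the fact that term-by-term differentiation of the orbit produces powers of $A$ acting on $e^{tA}f$. Concretely, one shows first that for a scalar type spectral operator generating a $C_0$-semigroup, each orbit $e^{tA}f$ is strongly infinitely differentiable on $(0,\infty)$ with
\begin{equation*}
\frac{d^n}{dt^n}e^{tA}f=A^ne^{tA}f=\int_{\sigma(A)}\lambda^ne^{t\lambda}\,dE_A(\lambda),\quad n\in\Z_+,\ t>0,
\end{equation*}
and in particular $e^{tA}f\in C^\infty(A)$ for every $t>0$; this is where the growth of $e^{t\lambda}$ must be controlled against $\lambda^n$, using that $\sigma(A)$ must lie in a left half-plane $\{\Rep\lambda\le\omega\}$ for the semigroup to exist, together with the estimates \eqref{bounded}, \eqref{tv}, \eqref{cond(i)} to pass from the scalar integrals to operator norm bounds.

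Granting the identity $e^{tA}f^{(n)}=A^ne^{tA}f$, the equivalence becomes almost tautological in structure. For the ``if'' direction, suppose $e^{tA}f\in C_{(m_n)}(A)$ for each $t\in I$; then for fixed $t$ there are $c,\alpha>0$ (with, in the Beurling case, $\alpha$ arbitrarily small) so that $\|A^ne^{tA}f\|\le c\alpha^nm_n$. To get the \emph{local uniform} bound required by the definition of $C_{(m_n)}(I,X)$ one fixes $[a,b]\subseteq I$ and writes, for $a\le t\le b$,
\begin{equation*}
\|A^ne^{tA}f\|=\|e^{(t-a)A}A^ne^{aA}f\|\le Ke^{|\omega|(b-a)}\|A^ne^{aA}f\|,
\end{equation*}
where $K=\sup_{0\le s\le b-a}\|e^{sA}\|<\infty$ by the semigroup bound; thus the uniform estimate on $[a,b]$ follows from the pointwise estimate at the single endpoint $a$. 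Conversely, for the ``only if'' direction, if $e^{tA}f|_I\in C_{(m_n)}(I,X)$ then for any $[a,b]\subseteq I$ one has $\max_{a\le t\le b}\|A^ne^{tA}f\|\le c\alpha^nm_n$, and evaluating at any single $t\in(a,b)$ gives the vector estimate $\|A^ne^{tA}f\|\le c\alpha^nm_n$, i.e.\ $e^{tA}f\in C_{(m_n)}(A)$; in the Roumieu case the quantifiers on $\alpha$ match up the same way. I would present the $C_{\{m_n\}}$ and $C_{(m_n)}$ cases in parallel, noting only the one place the quantifier over $\alpha$ changes.

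The main obstacle, and the step deserving the most care, is the justification of the differentiation-under-the-integral identity $\frac{d^n}{dt^n}e^{tA}f=A^ne^{tA}f$ together with the verification that $e^{tA}f\in C^\infty(A)$ for $t>0$ --- this is precisely where one must invoke that $\sigma(A)$ lies in a half-plane $\Rep\lambda\le\omega$ and estimate $|\lambda^ne^{t\lambda}|\le |\lambda|^ne^{t\omega}$, then use \eqref{cond(i)} to bound $\int_{\sigma(A)}|\lambda|^ne^{t\Rep\lambda}\,dv(f,g^*,\lambda)$ and conclude via the weak characterization of the domain of an unbounded scalar type spectral operator. Once this regularity fact is in hand (it is essentially standard for scalar type spectral semigroups, cf.\ the references to \cite{Berkson1966,Markin2002(2),Panchapagesan1969}), the remainder is the bookkeeping with intervals and quantifiers sketched above, together with the elementary semigroup translation estimate to upgrade pointwise vector bounds to locally uniform ones and vice versa.
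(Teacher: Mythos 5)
The paper does not reprove this proposition --- it is quoted from \cite[Proposition 4.1]{Markin2008} --- so your attempt has to be judged against the standard argument. Your overall architecture is the right one: reduce everything to the identity $\frac{d^n}{dt^n}e^{tA}f=A^ne^{tA}f$, then use the translation trick $A^ne^{tA}f=e^{(t-a)A}A^ne^{aA}f$ with $K=\sup_{0\le s\le b-a}\|e^{sA}\|<\infty$ to pass between a pointwise vector estimate at the left endpoint and the locally uniform estimate on $[a,b]$; the quantifier bookkeeping in the Roumieu and Beurling cases is also handled correctly.

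However, the preliminary claim on which you hang the whole proof is false: it is \emph{not} true that every orbit of a $C_0$-semigroup generated by a scalar type spectral operator is strongly infinitely differentiable on $(0,\infty)$, nor that $e^{tA}f\in C^\infty(A)$ for all $f\in X$ and $t>0$. Take $A=iB$ with $B$ an unbounded self-adjoint operator in a Hilbert space and $f\notin D(B)$: then $A$ is scalar type spectral (normal) and generates the unitary group $e^{itB}$, yet $e^{tA}f\notin D(A)$ for any $t$. Your proposed justification cannot be repaired, because the bound $|\lambda^ne^{t\lambda}|\le|\lambda|^ne^{t\omega}$ on the half-plane $\Rep\lambda\le\omega$ is unbounded in $|\Imp\lambda|$ and gives no control of $\int_{\sigma(A)}|\lambda|^ne^{t\Rep\lambda}\,dv(f,g^*,\lambda)$; indeed, if orbits were automatically smooth, Theorems \ref{thm} and \ref{GBTCUS} would have little content. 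The correct route is to derive the identity $\frac{d^n}{dt^n}e^{tA}f=A^ne^{tA}f$ from the hypothesis of each implication separately: in the ``only if'' direction, membership of the restricted orbit in $C_{\{m_n\}}(I,X)\subseteq C^\infty(I,X)$ already gives strong differentiability, and the standard semigroup fact that differentiability of the orbit at $t$ forces $e^{tA}f\in D(A)$ with derivative $Ae^{tA}f$ identifies the higher derivatives inductively; in the ``if'' direction, the assumption $e^{tA}f\in C_{\{m_n\}}(A)\subseteq C^\infty(A)$ for $t\in I$, combined with the fact that $g\in D(A^n)$ makes $s\mapsto e^{sA}g$ $n$ times continuously differentiable, yields the smoothness on $I$. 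With that conditional version in place, the rest of your argument goes through.
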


\begin{thm}{\rm(}\cite[Theorem
$5.1$]{Markin2008}{\rm)}.\label{thm}
Let $\left\{m_n\right\}_{n=0}^\infty$ be a sequence of positive numbers satisfying the conditions {\bf(GR)} and {\bf(SBC)}. Then a scalar type spectral operator $A$ in a complex Banach space $(X,\|\cdot\|)$ generates a $C_{\{m_n\}}$-semigroup iff
there are $b>0$ and $a\in \R$ such that
\begin{equation*}
\Rep\lambda\le a-bM(|\Imp\lambda|),\quad \lambda\in\sigma(A),
\end{equation*}
where $M(\lambda)=\ln T(\lambda)$, $0\le\lambda<\infty$, and the function $T(\cdot)$ defined by \eqref{T} is
replaceable with any {\it nonnegative}, {\it continuous}, and {\it increasing} on $[0,\infty)$ function $F(\cdot)$
satis\-fying \eqref{replace}.
\end{thm}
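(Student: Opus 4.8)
\smallskip
\noindent\emph{Proof proposal.} The plan is to prove the two implications separately, in each case passing through the Carleman classes of vectors of $A$ by means of Proposition~\ref{prop} and then rendering the resulting membership as a pointwise estimate on $\sigma(A)$ via the operational calculus. Throughout, $M>0$ denotes the constant from \eqref{bounded} (not to be confused with the function $M(\cdot)=\ln T(\cdot)$), and we use repeatedly that $\|F(A)\|\le 4M\sup_{\sigma(A)}|F|$ for every bounded Borel function $F$ (immediate from \eqref{tv}).

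\smallskip
\noindent\textbf{Sufficiency.} Since $M(\cdot)\ge 0$, the hypothesis gives $\Rep\lambda\le a$ on $\sigma(A)$, so $A$ generates a $C_{0}$-semigroup $\{e^{tA}\mid t\ge 0\}$ (a scalar type spectral operator generates a $C_{0}$-semigroup iff $\sup_{\sigma(A)}\Rep\lambda<\infty$, see~\cite{Markin2002(2)}). By Proposition~\ref{prop} it suffices to show $e^{tA}f\in C_{\{m_{n}\}}(A)$ for every $t>0$ and $f\in X$, i.e.\ $\|A^{n}e^{tA}f\|\le c\alpha^{n}m_{n}$ for suitable $c,\alpha>0$; since $A^{n}e^{tA}=\int_{\sigma(A)}\lambda^{n}e^{t\lambda}\,dE_{A}(\lambda)$, this reduces to $\sup_{\lambda\in\sigma(A)}|\lambda|^{n}e^{t\Rep\lambda}\le c\alpha^{n}m_{n}$. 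On the bounded part of $\sigma(A)$ where $\Rep\lambda\ge 0$ the estimate is trivial ($|\lambda|^{n}e^{t\Rep\lambda}\le \mathrm{const}^{n}\le c\,m_{n}$ because \textbf{(GR)} makes $m_{n}$ outgrow every geometric sequence). Where $\Rep\lambda<0$ I would split $|\lambda|^{n}\le 2^{n}\bigl(|\Rep\lambda|^{n}+|\Imp\lambda|^{n}\bigr)$: the real part is controlled by $\max_{u\ge 0}u^{n}e^{-tu}=(n/et)^{n}$, which \textbf{(GR)} turns into $c\alpha^{n}m_{n}$ since $n^{n}\le e^{n}n!$ and $n!\le c_{0}^{-1}\alpha_{0}^{-n}m_{n}$ by \textbf{(GR)}; the imaginary part is estimated via $e^{t\Rep\lambda}\le e^{ta}T(|\Imp\lambda|)^{-tb}$ together with the single-term bound $T(v)\ge m_{0}v^{Jn}/m_{Jn}$, giving $|\Imp\lambda|^{n}e^{-tbM(|\Imp\lambda|)}\le (m_{Jn}/m_{0})^{1/J}$ once the integer $J=J(t)$ is chosen with $J\ge 1/(tb)$, and then $(m_{Jn}/m_{0})^{1/J}\le c\alpha^{n}m_{n}$ because iterating the upper bound $m_{n}/(m_{k}m_{n-k})\le LH^{n}$ contained in \textbf{(SBC)} yields $m_{Jn}\le \mathrm{const}^{J}H^{O(J^{2}n)}m_{n}^{J}$. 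Assembling these bounds shows $A$ generates a $C_{\{m_{n}\}}$-semigroup.

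\smallskip
\noindent\textbf{Necessity.} Suppose $A$ generates a $C_{\{m_{n}\}}$-semigroup; then $A$ generates a $C_{0}$-semigroup, so $\omega:=\sup_{\sigma(A)}\Rep\lambda<\infty$, and by Proposition~\ref{prop} we have $e^{A}f\in C_{\{m_{n}\}}(A)$ for all $f\in X$. By the scalar-type-spectral analogue of the equalities \eqref{CC} (see~\cite{Markin2004(2)}, \cite{Markin2015(1)}), $C_{\{m_{n}\}}(A)=\bigcup_{s>0}D(T(s|A|))$, so $e^{A}X\subseteq\bigcup_{j\in\N}D(T(j^{-1}|A|))$, an increasing union. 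A Baire category / uniform boundedness argument then produces a single $s>0$ with $e^{A}X\subseteq D(T(s|A|))$; since $T(s|A|)$ is closed and $e^{A}$ is bounded, the everywhere-defined operator $T(s|A|)e^{A}$ is closed, hence bounded, say $\|T(s|A|)e^{A}\|=K$. By multiplicativity of the operational calculus $T(s|A|)e^{A}=F(A)$ with $F(\lambda)=T(s|\lambda|)e^{\lambda}$ continuous, and for continuous $F$ one also has the lower bound $\sup_{\sigma(A)}|F|\le\|F(A)\|$ (each $\lambda_{0}\in\sigma(A)$ lies in the support of $E_{A}(\cdot)$, so $E_{A}(B(\lambda_{0},\varepsilon))\ne 0$, and on a small ball $F$ is nearly constant); hence $T(s|\lambda|)e^{\Rep\lambda}\le K$ on $\sigma(A)$. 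Taking logarithms, using $|\lambda|\ge|\Imp\lambda|$ and monotonicity of $M(\cdot)$, and, when $s<1$, the scaling consequence $2^{-n}M(h^{n}\mu)\le M(\mu)+\mathrm{const}$ of \textbf{(SBC)} to replace $M(s|\Imp\lambda|)$ by $2^{-n}M(|\Imp\lambda|)-\mathrm{const}$ for a suitable $n$, one obtains $\Rep\lambda\le a-bM(|\Imp\lambda|)$ on $\sigma(A)$ with $b>0$. The replaceability of $T(\cdot)$ by any nonnegative continuous increasing $F(\cdot)$ satisfying \eqref{replace} then follows from \eqref{replace} and the corresponding equivalence of the Carleman classes of vectors.

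\smallskip
\noindent\textbf{Main obstacle.} The delicate step is producing the \emph{uniform} $s>0$ in the necessity argument: the subspaces $\{f\in X\mid e^{A}f\in D(T(j^{-1}|A|))\}$ need not be closed in a general Banach space, so a naive Baire argument requires reflexivity, and in the general case one must fall back on the description of $C_{\{m_{n}\}}(A)$ through the measures $v(f,g^{*},\cdot)$ underlying the reflexivity-free proof of \eqref{CC}. The rest is bookkeeping: choosing $J=J(t)$ and tracking constants so that the \textbf{(SBC)}-driven bound on $m_{Jn}$ is genuinely absorbed into $c\alpha^{n}m_{n}$ for \emph{every} $t>0$.
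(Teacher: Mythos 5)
First, a structural point: this paper does not itself prove Theorem~\ref{thm} --- it is recalled from \cite{Markin2008} --- and the only proof given here is that of its Beurling counterpart, Theorem~\ref{GBTCUS}, which follows the same scheme as the cited one. Measured against that scheme, your sufficiency argument takes a genuinely different and essentially workable route: you bound $\sup_{\lambda\in\sigma(A)}|\lambda|^{n}e^{t\Rep\lambda}$ directly, feeding the spectral condition into single-term lower bounds for $T(\cdot)$ and an iterated form of \textbf{(SBC)}, whereas the paper verifies membership of $e^{tA}f$ in $D(T(s|A|))$ through the integral criteria of \cite[Proposition 3.1]{Markin2002(1)} applied to $\int_{\sigma(A)}T(s|\lambda|)e^{t\Rep\lambda}\,dv(f,g^{*},\lambda)$, using the functional inequalities for $M(\cdot)$ and $M^{-1}(\cdot)$ derived from the growth conditions, and then invokes \eqref{CC}. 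Your version is more elementary, and the bookkeeping you describe (choosing $J\ge 1/(tb)$ and absorbing $m_{Jn}^{1/J}$ into $c\alpha^{n}m_{n}$ via the upper half of \textbf{(SBC)}) does close, since for the Roumieu class the constants are allowed to depend on $t$.

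The necessity half, however, has a genuine gap exactly where you flag it, and flagging it does not repair it. To pass from ``$e^{A}f\in\bigcup_{s>0}D(T(s|A|))$ for every $f$'' to a single $s$ with $e^{A}X\subseteq D(T(s|A|))$, your Baire argument needs the sets $\bigl\{f\in X\bigm|e^{A}f\in D(T(j^{-1}|A|)),\ \|T(j^{-1}|A|)e^{A}f\|\le k\bigr\}$ to be closed in $X$; for a closed operator in a non-reflexive Banach space such graph-norm balls need not be closed, and Baire then only locates a ball of the \emph{closure}, which may leak outside the domain. The paper never meets this difficulty because it proves necessity by \emph{contrapositive} with an explicit single-vector construction: assuming the spectral condition fails, it selects violating points $\lambda_{n}\in\sigma(A)$ with $|\lambda_{n}|\to\infty$, surrounds them by pairwise disjoint open disks $\Delta_{n}$ on which the violation persists, picks unit vectors $e_{n}\in E_{A}(\Delta_{n})X$ and Hahn--Banach functionals $e_{n}^{*}$ biorthogonal to them up to factors $d_{n}\ge\varepsilon>0$, and shows that for $f=\sum_{n}n^{-2}e_{n}$ and the corresponding $g^{*}$ the integral $\int_{\sigma(A)}T(s|\lambda|)e^{t\Rep\lambda}\,dv(f,g^{*},\lambda)$ diverges (in the Roumieu setting this must be arranged for \emph{every} $s>0$, since $C_{\{m_{n}\}}(A)=\bigcup_{s>0}D(T(s|A|))$), so the orbit leaves the Carleman class. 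To complete your proof you must either restrict to reflexive $X$ or replace the uniform-$s$ step by a construction of this kind; as written, the necessity argument is incomplete.
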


Now, we are going to prove the following

\begin{thm}\label{GBTCUS}
Let $\left\{m_n\right\}_{n=0}^\infty$ be a sequence of positive numbers satisfying the conditions {\bf(SGR)} and {\bf(BC)}. Then a scalar type spectral operator $A$ in a complex Banach space $(X,\|\cdot\|)$ generates a $C_{(m_n)}$-semigroup iff, for any $b>0$, there is an $a\in \R$ such that
\begin{equation*}
\Rep\lambda\le a-bM(|\Imp\lambda|),\ \lambda\in\sigma(A),
\end{equation*}
where $M(\lambda)=\ln T(\lambda)$, $0\le\lambda<\infty$, and the function $T(\cdot)$ defined by \eqref{T} is replaceable with any {\it nonnegative}, {\it continuous}, and {\it increasing} on $[0,\infty)$ function $F(\cdot)$ satisfying \eqref{replace}.
\end{thm}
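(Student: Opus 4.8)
The plan is to recast the claim as an estimate on $\sigma(A)$ and then verify the equivalence of that estimate with the stated spectral condition. Observe first that the condition of the theorem, used with any single $b>0$ together with $M(\cdot)\ge 0$, confines $\sigma(A)$ to a left half-plane; since a scalar type spectral operator generates a $C_0$-semigroup precisely when its spectrum lies in such a half-plane (see \cite{Berkson1966,Markin2002(2),Panchapagesan1969}), we may assume throughout that $e^{tA}=\int_{\sigma(A)}e^{t\lambda}\,dE_A(\lambda)$, $t\ge 0$, exists. By Proposition~\ref{prop} (with $I=(0,\infty)$), $A$ generates a $C_{(m_n)}$-semigroup iff $e^{tA}f\in C_{(m_n)}(A)$ for all $t>0$ and $f\in X$, and, since \textbf{(SGR)} implies \textbf{(WGR)}, the description \eqref{CC} rewrites the latter as $e^{tA}f\in D(T(s|A|))$ for all $s,t>0$ and $f\in X$. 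A closed-graph argument ($T(s|A|)$ closed, $e^{tA}$ bounded), together with the product rule and the bounded-convergence property of the o.c., shows this is equivalent to $T(s|A|)e^{tA}=\bigl(T(s|\cdot|)e^{t\,\cdot}\bigr)(A)\in L(X)$ for all $s,t>0$; and since $T(s|\cdot|)e^{t\,\cdot}$ is continuous while $\sigma(A)$ is the support of $E_A(\cdot)$, an operator $F(A)$ of this type lies in $L(X)$ iff $F$ is bounded on $\sigma(A)$. Taking logarithms ($M=\ln T$), we conclude that $A$ generates a $C_{(m_n)}$-semigroup iff
\begin{equation*}
\forall\, s,t>0\ \ \exists\, C=C(s,t):\qquad M(s|\lambda|)+t\Rep\lambda\le C,\quad \lambda\in\sigma(A).
\end{equation*}
The replaceability of $T$ by an arbitrary $F$ obeying \eqref{replace} follows, as in the Roumieu case of Theorem~\ref{thm}, from \eqref{replace} together with the scaling estimate \eqref{BC2}, which absorb any dilation of the argument of $M$ into the constants. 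Thus everything reduces to showing that this displayed family of estimates is equivalent to the condition of the theorem.

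For the (easy) implication from the estimates to the spectral condition, take, given $b>0$, the estimate with $s=1$ and $t=1/b$: as $|\lambda|\ge|\Imp\lambda|$ and $M(\cdot)$ is increasing, $\Rep\lambda\le bC(1,1/b)-bM(|\lambda|)\le bC(1,1/b)-bM(|\Imp\lambda|)$ on $\sigma(A)$, so $a:=bC(1,1/b)$ serves; neither \textbf{(BC)} nor \textbf{(SGR)} is needed here.

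For the converse — the substantial direction — assume the spectral condition, with $a=a(b)$ for each $b>0$, and fix $s,t>0$; we must bound $M(s|\lambda|)+t\Rep\lambda$ on $\sigma(A)$. If $\Rep\lambda\ge 0$, the condition with $b=1$ forces $0\le\Rep\lambda\le a(1)$ and $M(|\Imp\lambda|)\le a(1)$, i.e. $|\Imp\lambda|\le M^{-1}(a(1))$, so $\lambda$ runs over a bounded set and the bound is clear by continuity of $M$. If $\Rep\lambda<0$, put $\mu:=-\Rep\lambda>0$ and $v:=\Imp\lambda$; from $|\lambda|\le\mu+|v|$ and the monotonicity and nonnegativity of $M$, $M(s|\lambda|)\le M(2s\mu)+M(2s|v|)$, whence
\begin{equation*}
M(s|\lambda|)+t\Rep\lambda\le\Bigl[M(2s\mu)-\tfrac{t}{2}\mu\Bigr]+\Bigl[M(2s|v|)-\tfrac{t}{2}\mu\Bigr].
\end{equation*}
By \textbf{(SGR)} in the form \eqref{SGR}, $M(x)\le\tfrac{t}{8s}x+C_1$ for all $x\ge 0$ with some $C_1=C_1(s,t)$, so the first bracket is $\le-\tfrac{t}{4}\mu+C_1\le C_1$. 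For the second bracket, fix $n\in\N$ with $h^n\ge 2s$ (possible since $h>1$): then \eqref{BC1} gives $M(2s|v|)\le M(h^n|v|)\le 2^nM(|v|)+[2^n-1]\,|\ln(m_0l)|$, while the spectral condition with $b:=2^{n+1}/t$ yields $\mu\ge bM(|v|)-a(b)$, i.e. $-\tfrac{t}{2}\mu\le-2^nM(|v|)+\tfrac{t}{2}a(b)$; adding, the second bracket is at most $[2^n-1]\,|\ln(m_0l)|+\tfrac{t}{2}a(b)=:C_2(s,t)$. Hence $M(s|\lambda|)+t\Rep\lambda\le C_1+C_2$ throughout $\sigma(A)$, which establishes the estimates and finishes the proof.

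The main obstacle is this last paragraph: getting the two hypotheses to reinforce each other — the sub-linear growth of $M$ furnished by \textbf{(SGR)} to absorb $M(2s\mu)$ against half of $t\mu$, and the scaling inequality coming from \textbf{(BC)}, used together with the freedom to choose $b$ as large as we wish (here $b=2^{n+1}/t$, depending on $s$ and $t$), to absorb $M(2s|v|)$ against the remaining half of $t\mu$ by way of the spectral condition. The operational-calculus ingredients — the product rule $T(s|A|)e^{tA}=(T(s|\cdot|)e^{t\cdot})(A)$, the reduction of everywhere-definedness to boundedness, and $F(A)\in L(X)$ iff $F$ is $E_A$-essentially bounded on $\sigma(A)$ — are routine and are handled as in \cite{Markin2008}.
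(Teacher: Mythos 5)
Your argument is correct, but it takes a genuinely different route from the paper's. The paper never reduces the problem to boundedness of the symbol $T(s|\lambda|)e^{t\Rep\lambda}$ on $\sigma(A)$: for the ``if'' part it verifies membership $e^{tA}f\in D(T(s|A|))$ vector by vector through the integrability criterion of {\cite[Proposition 3.1]{Markin2002(1)}}, estimating $\int_{\sigma(A)}T(s|\lambda|)e^{t\Rep\lambda}\,dv(f,g^*,\lambda)$ over a partition of the spectrum; and for the ``only if'' part it argues by contrapositive, constructing from points $\lambda_n\in\sigma(A)$ violating the condition a family of pairwise disjoint disks $\Delta_n$, unit vectors $e_n\in E_A(\Delta_n)X$, Hahn--Banach functionals $e_n^*$, and an explicit $f=\sum n^{-2}e_n$ whose orbit fails to lie in $C_{(m_n)}(A)$. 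You instead use the closed graph theorem together with the product rule and the essential-range facts of the operational calculus to convert ``every orbit lies in $\bigcap_{s>0}D(T(s|A|))$'' into the pointwise family of estimates $M(s|\lambda|)+t\Rep\lambda\le C(s,t)$ on $\sigma(A)$, after which both implications become elementary inequalities; the substantial one uses \eqref{SGR} to absorb $M(2s|\Rep\lambda|)$ and \eqref{BC1} with $b=2^{n+1}/t$ to absorb $M(2s|\Imp\lambda|)$ --- precisely the two inequalities the paper deploys inside its integral estimate, with the same choices $h^{N}\ge 2s$ and $b=2^{N+1}t^{-1}$. What your route buys is brevity and symmetry: the entire disk/Hahn--Banach construction of the ``only if'' part collapses into the one-line observation that an unbounded symbol cannot give an everywhere-defined operator. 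What it costs is reliance on two operational-calculus facts that carry the full weight of that direction and should be cited explicitly (e.g., to \cite{Dun-SchIII}): that $F(A)$ is closed, hence bounded once everywhere defined, and that a bounded $F(A)$ forces $F$ to be $E_A$-essentially bounded on $\sigma(A)$; the paper's longer construction is self-contained modulo {\cite[Proposition 3.1]{Markin2002(1)}} and has the added virtue of exhibiting a concrete orbit witnessing the failure of ultradifferentiability.
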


\begin{proof}
{{\bf "If"} Part.}\quad By the hypothesis,
\begin{equation*}
\Rep\lambda\le a,\quad  \lambda\in\sigma(A),
\end{equation*}
with some $a\in\R$, which, by {\cite[Proposition $3.1$]{Markin2002(2)}} implies that $A$ does generate a
$C_0$-semigroup of its exponentials $\left\{e^{tA}\big|t\ge 0\right\}$ (see \cite{Markin2002(2)},
cf. also \cite{Berkson1966,Panchapagesan1969}).

Consider an arbitrary \textit{orbit} $e^{tA}f$, $t\ge 0$, $f\in X$.

By Proposition \ref{prop}, we are to show that
\begin{equation*}
e^{tA}f\in C_{(m_n)}(A),\quad t>0.
\end{equation*}

For arbitrary $t>0$ and $s>0$, let us fix a sufficiently large $N\in \N$ so that
\begin{equation*}
h^{-N}2s\le 1,
\end{equation*}
where $h>1$ is the constant from the condition \textbf{(BC)},
and set
\begin{equation*}
b:=2^{N+1}t^{-1}>0.
\end{equation*}

Since, due to the condition \textbf{(SGR)}, $\alpha>0$ in \eqref{SGR} is arbitrary, we can assume that $\alpha:=b>0$.

For any $g^*\in X^*$,
$$
\begin{aligned}
& \int_{\sigma(A)}T(s|\lambda|)e^{t\Rep\lambda}\,dv(f,g^*,\lambda)
\\
& \qquad =\int_{\{\lambda\in\sigma(A)|\Rep\lambda\le\min(-2^{-1}bM(R),a)\}}T(s|\lambda|)
e^{t\Rep\lambda}\,dv(f,g^*,\lambda)
\\
& \qquad +\int_{\{\lambda\in\sigma(A)|\min(-2^{-1}bM(R),a)<\Rep\lambda\le a\}}T(s|\lambda|)
e^{t\Rep\lambda}\,dv(f,g^*,\lambda)
<\infty,
\end{aligned}
$$
where $R=R(\alpha)>0$ is the constant from \eqref{SGR}.

Indeed, the latter of the two integrals in the right side of the equality is finite due to the \textit{boundedness} of the set
$\bigl\{\lambda\in\sigma(A)\bigm|\min(-2^{-1}bM(R),a)<\Rep\lambda\le a\bigr\}$ (for $a\le -2^{-1}bM(R)$, the set is, obviously, empty), the \textit{continuity} of the integrand on ${\mathbb C}$,
and the \textit{finiteness} of the measure $v(f,g^*,\cdot)$ (see \eqref{tv}).

For the former one, there are the two possibilities
\begin{equation*}
a\le 0\quad \text{or}\quad a>0.
\end{equation*}

If $a\le 0$,
\begin{multline}\label{m1}
\int_{\{\lambda\in\sigma(A)|\Rep\lambda\le\min(-2^{-1}bM(R),a)\}}T(s|\lambda|)
e^{t\Rep\lambda}\,dv(f,g^*,\lambda)
\\
\shoveleft{
=\int_{\{\lambda\in\sigma(A)|\Rep\lambda\le\min(-2^{-1}bM(R),a)\}}e^{M(s|\lambda|)}
e^{t\Rep\lambda}\,dv(f,g^*,\lambda)
}\\
\shoveleft{
\le\int_{\{\lambda\in\sigma(A)|\Rep\lambda\le\min(-2^{-1}bM(R),a)\}}e^{M(s[|\Rep\lambda|+|\Imp\lambda|])}
e^{t\Rep\lambda}\,dv(f,g^*,\lambda)
}\\
\hfill
\text{for}\ \lambda\in \sigma(A)\ \text{with}\ \Rep\lambda\le\min(-2^{-1}bM(R),a),
\\
\hfill
\Rep\lambda \le -2^{-1}bM(R)\le 0\ \text{and}\ |\Imp\lambda|\le M^{-1}(b^{-1}[a-\Rep\lambda]);
\\
\shoveleft{
\le \int_{\{\lambda\in\sigma(A)|\Rep\lambda\le\min(-2^{-1}bM(R),a)\}}\!\!\!\!\!
e^{M(s[-\Rep\lambda+M^{-1}(b^{-1}[a-\Rep\lambda])])}e^{t\Rep\lambda}\,dv(f,g^*,\lambda)
}\\
\hfill
\text{
since $a\le 0$, $a-\Rep\lambda\le -2\Rep\lambda$ whenever  $\Rep\lambda\le\min(-2^{-1}bM(R),a)\le 0$;
}
\\
\shoveleft{
\le \int_{\{\lambda\in\sigma(A)|\Rep\lambda\le\min(-2^{-1}bM(R),a)\}}
\!\!\!\!\!\!e^{M(s[-\Rep\lambda+M^{-1}(2b^{-1}[-\Rep\lambda])])}
e^{t\Rep\lambda}
\,dv(f,g^*,\lambda)
}\\
\hfill
\text{by \eqref{SGR}, $2b^{-1}[-\Rep\lambda]\le 2\alpha^{-1}M(2b^{-1}[-\Rep\lambda])$ whenever $\Rep\lambda\le-2^{-1}bM(R)$;}
\\
\hfill
\text{since $\alpha:=b$, $-\Rep\lambda\le M(2b^{-1}[-\Rep\lambda])$ whenever $\Rep\lambda\le-2^{-1}bM(R)$;}
\\
\shoveleft{
\le \int_{\{\lambda\in\sigma(A)|\Rep\lambda\le\min(-2^{-1}bM(R),a)\}}
e^{M(2sM^{-1}(2b^{-1}[-\Rep\lambda]))}
e^{t\Rep\lambda}\,dv(f,g^*,\lambda)
}\\
\hfill
\text{by \eqref{BC2};} 
\end{multline}	
\begin{multline*}
= \int_{\{\lambda\in\sigma(A)|\Rep\lambda\le\min(-2^{-1}bM(R),a)\}}
e^{2^NM(h^{-N}2sM^{-1}(2b^{-1}[-\Rep\lambda]))-[2^N-1]\ln(m_0l)}
\\
\hfill
\times e^{t\Rep\lambda}\,dv(f,g^*,\lambda)
\\
\shoveleft{
=(m_0L)^{-[2^{N}-1]}
\int_{\{\lambda\in\sigma(A)|\Rep\lambda\le\min(-2^{-1}bM(R),a)\}}
e^{2^NM(h^{-N}2sM^{-1}(2b^{-1}[-\Rep\lambda]))}
}\\
\hfill
\times e^{t\Rep\lambda}\,dv(f,g^*,\lambda)
\\
\hfill
\text{by choice, $h^{-N}2s\le 1$;}
\\
\shoveleft{
\le (m_0L)^{-[2^{N}-1]}
\int_{\{\lambda\in\sigma(A)|\Rep\lambda\le\min(-2^{-1}bM(R),a)\}}
e^{2^NM(M^{-1}(2b^{-1}[-\Rep\lambda]))}
}\\
\hfill
\times e^{t\Rep\lambda}\,dv(f,g^*,\lambda)
\\
\shoveleft{
=(m_0L)^{-[2^{N}-1]}
\int_{\{\lambda\in\sigma(A)|\Rep\lambda\le\min(-2^{-1}bM(R),a)\}}
e^{2^{N+1}b^{-1}[-\Rep\lambda]+t\Rep\lambda}
\,dv(f,g^*,\lambda)
}\\
\hfill
\text{by choice, $b:=2^{N+1}t^{-1}$;}
\\
\shoveleft{
= (m_0L)^{-[2^{N}-1]}v\left(f,g^*,\left\{\lambda\in\sigma(A)\big|\Rep\lambda\le\min(-2^{-1}bM(R),a)\right\}\right)
}\\
\shoveleft{
\le (m_0L)^{-[2^{N}-1]}v(f,g^*,\sigma(A))
}\\
\hfill
\text{by \eqref{tv};}
\\
\le (m_0L)^{-[2^{N}-1]}4M\|f\|\|g^*\|<\infty.
\end{multline*}

If $a>0$,
\begin{multline*}
\int_{\{\lambda\in\sigma(A)|\Rep\lambda\le\min(-2^{-1}bM(R),a)\}}T(s|\lambda|)
e^{t\Rep\lambda}\,dv(f,g^*,\lambda)
\\
\shoveleft{
=\int_{\{\lambda\in\sigma(A)|\Rep\lambda\le\min(-2^{-1}bM(R),-a)\}}T(s|\lambda|)
e^{t\Rep\lambda}
e^{t\Rep\lambda}\,dv(f,g^*,\lambda)
}\\
+\int_{\{\lambda\in\sigma(A)|\min(-2^{-1}bM(R),-a)<\Rep\lambda\le -2^{-1}bM(R)\}}T(s|\lambda|)
e^{t\Rep\lambda}\,dv(f,g^*,\lambda)<\infty.
\end{multline*}

Indeed, the latter of the two integrals in the right side of the equality is finite due to the \textit{boundedness} of the set
$\bigl\{\lambda\in\sigma(A) \bigm|\min(-a,-2^{-1}bM(R))<\Rep\lambda\le -2^{-1}bM(R)\bigr\}$
(for $a\le 2^{-1}bM(R)$, the set is, obviously, empty), the \textit{continuity} of the integrand on ${\mathbb C}$, and the \textit{finiteness} of the measure $v(f,g^*,\cdot)$ (see \eqref{tv}).

For the former one, we have
\begin{multline*}
\int_{\{\lambda\in\sigma(A)|\Rep\lambda\le\min(-2^{-1}bM(R),-a)\}}T(s|\lambda|)
e^{t\Rep\lambda}\,dv(f,g^*,\lambda)
\\
\shoveleft{
=\int_{\{\lambda\in\sigma(A)|\Rep\lambda\le\min(-2^{-1}bM(R),-a)\}}e^{M(s|\lambda|)}
e^{t\Rep\lambda}\,dv(f,g^*,\lambda)
}\\
\shoveleft{
\le\int_{\{\lambda\in\sigma(A)|\Rep\lambda\le\min(-2^{-1}bM(R),-a)\}}e^{M(s[|\Rep\lambda|+|\Imp\lambda|])}
e^{t\Rep\lambda}\,dv(f,g^*,\lambda)
}\\
\hfill
\text{for}\ \lambda\in \sigma(A)\ \text{with}\ \Rep\lambda\le\min(-2^{-1}bM(R),-a),
\\
\hfill
\Rep\lambda \le -2^{-1}bM(R)\le 0\ \text{and}\ |\Imp\lambda|\le M^{-1}(b^{-1}[a-\Rep\lambda]);
\\
\shoveleft{
\int_{\{\lambda\in\sigma(A)|\Rep\lambda\le \min(-2^{-1}bM(R),-a)\}}
e^{M(s[-\Rep\lambda+M^{-1}(b^{-1}[a-\Rep\lambda])])}
e^{t\Rep\lambda}\,dv(f,g^*,\lambda)
}\\
\hfill
\text{$a-\Rep\lambda \le -2\Rep\lambda$ whenever $\Rep\lambda \le -a$;}
\end{multline*}
\begin{multline*}
\le \int_{\{\lambda\in\sigma(A)|\Rep\lambda\le\min(-2^{-1}bM(R),-a)\}}
e^{M(s[-\Rep\lambda+M^{-1}(2b^{-1}[-\Rep\lambda])])}
e^{t\Rep\lambda}\,dv(f,g^*,\lambda)
\\
\hfill
\text{in the same manner as in \eqref{m1};}
\\
<\infty.
\end{multline*}

Thus, we have proved that, for arbitrary $s>0$, $t>0$, $f\in X$, and $g^*\in X^*$,
\begin{equation}\label{ci}
\int_{\sigma(A)} T(s|\lambda|)e^{t\Rep\lambda}\,dv(f,g^*,\lambda)<\infty.
\end{equation}
Furthermore, for any $s>0$, $t>0$, $f\in X$,
\begin{equation*}\label{cii}
\sup_{\{g^*\in X^*\,|\,\|g^*\|=1\}}\int_{\{\lambda\in\sigma(A)| T(s|\lambda|)
e^{t\Rep\lambda}>n\}}
T(s|\lambda|)e^{t\Rep\lambda}\,dv(f,g^*,\lambda)
\to 0\quad \text{as\quad $n\to\infty$}.
\end{equation*}
Indeed, as follows from the preceding argument,
for any $s,t>0$, the spectrum $\sigma(A)$ can be partitioned into two Borel subsets $\sigma_1$ and $\sigma_2$ ($\sigma(A)=\sigma_1\cup\sigma_2$, $\sigma_1\cap\sigma_2=\emptyset$) in such a way that $\sigma_1$ is \textit{bounded} and
\begin{equation*}
T(s|\lambda|)e^{t\Rep\lambda}\le 1,\quad \lambda\in\sigma_2.
\end{equation*}

Therefore,
\begin{multline*}
\sup_{\{g^*\in X^*\,|\,\|g^*\|=1\}}\int_{\{\lambda\in\sigma(A)|
T(s|\lambda|)e^{t\Rep\lambda}>n\}}
T(s|\lambda|)e^{t\Rep\lambda}\,dv(f,g^*,\lambda)
\\
\shoveleft{
= \sup_{\{g^*\in X^*\,|\,\|g^*\|=1\}}\biggl[\int_{\{\lambda\in\sigma_1|
T(s|\lambda|)e^{t\Rep\lambda}>n\}}
T(s|\lambda|)e^{t\Rep\lambda}\,dv(f,g^*,\lambda)
}\\
\shoveleft{
+\int_{\{\lambda\in\sigma_2| T(s|\lambda|)
e^{t\Rep\lambda}>n\}}
T(s|\lambda|)e^{t\Rep\lambda}\,dv(f,g^*,\lambda)\biggr]
}\\
\hfill
\text{since $\sigma_1$ is bounded and $T(s|\cdot|)e^{t\Rep\cdot}$ is continuous on ${\mathbb C}$, }
\\
\hfill
\text{there is such a $C\ge 1$ that}\
T(s|\lambda|)e^{t\Rep\lambda}\le C,\ \lambda\in\sigma_1;
\\
\shoveleft{
\le \sup_{\{g^*\in X^*\,|\,\|g^*\|=1\}}\bigl[Cv\left(f,g^*,\{\lambda\in\sigma_1\big| T(s|\lambda|)e^{t\Rep\lambda}>n\}\right)
}\\
\shoveleft{
+ v\left(f,g^*,\{\lambda\in\sigma_2\big| T(s|\lambda|)e^{t\Rep\lambda}>n\}\right)\bigr]
}\\
\shoveleft{
\le \sup_{\{g^*\in X^*\,|\,\|g^*\|=1\}}Cv\left(f,g^*,\{\lambda\in\sigma(A)\big| T(s|\lambda|)e^{t\Rep\lambda}>n\}\right)
\hfill
\text{by \eqref{cond(ii)} with $F(\lambda)\equiv 1$;}
}\\
\shoveleft{
\le \sup_{\{g^*\in X^*\,|\,\|g^*\|=1\}}C4M
\|E_A(\{\lambda\in\sigma(A)| T(s|\lambda|)e^{t\Rep\lambda}>n\})f\|\|g^*\|
}\\
\shoveleft{
= 4CM\|E_A(\{\lambda\in\sigma(A)| T(s|\lambda|)e^{t\Rep\lambda}>n\})f\|
\hfill
\text{by the strong continuity of the {\it s.m.};}
}\\
\to 0\quad \text{as\quad $n\to\infty$.}
\end{multline*}

According to {\cite[Proposition $3.1$]{Markin2002(1)}}, \eqref{ci} and \eqref{cii} imply that,
for any $t>0$, $f\in X$, and $s>0$,
\begin{equation*}
e^{tA}f\in D(T(s|A|)).
\end{equation*}
Hence, for any $f\in X$, due to \eqref{CC},
\begin{equation*}
e^{tA}f\in \bigcap_{s>0} D(T(s|A|))=C_{(m_n)}(A),\quad t>0,
\end{equation*}
which, by Proposition \ref{prop}, implies that,
for $f\in X$,
\begin{equation*}
e^{\cdot A}f\in C_{(m_n)}((0,\infty),X),
\end{equation*}
i.e., the $C_0$-semigroup $\left\{e^{tA}\big|t\ge 0\right\}$ generated by $A$ is a $C_{(m_n)}$-semigroup.

\medskip
{{\bf "Only if"} Part.}\quad We shall prove this part by \textit{contrapositive}, i.e., assuming that there is such a $b>0$ that for any $a\in\R$,
\begin{equation*}
\sigma(A)\setminus\bigl\{\lambda\in {\mathbb C} \bigm|\Rep\lambda\le
a-bM(|\Imp\lambda|)\bigr\}\neq\emptyset,
\end{equation*}
we are to show that $A$ does not generate a $C_{(m_n)}$-semigroup.

Observe that the latter readily implies that the set
\begin{equation*}
\sigma(A)\setminus\left\{\lambda\in {\mathbb C} \big|\Rep\lambda\le
-bM(|\Imp\lambda|)
\right\}
\end{equation*}
is {\it unbounded},

For $\sigma(A)$, there are two possibilities
\begin{equation*}
\sup_{\lambda\in\sigma(A)}\Rep\lambda=\infty\quad \text{or}
\quad \sup_{\lambda\in\sigma(A)}\Rep\lambda<\infty.
\end{equation*}

The first one implies that $A$ does not generate
a $C_0$-semigroup \cite{Hille-Phillips}, let alone a $C_{(m_n)}$-semigroup.

With
\begin{equation}\label{upbound}
\sup_{\lambda\in\sigma(A)}\Rep\lambda<\infty
\end{equation}
being the case, $A$ generates a $C_0$-semigroup of its exponentials $\left\{e^{tA}\big|t\ge 0\right\}$  \cite{Markin2002(2)} and one can choose a sequence of points $\left\{\lambda_n\right\}_{n=1}^\infty$ in the complex plane as follows:
\begin{equation*}
\begin{gathered}
\lambda_n \in \sigma(A),\quad  n\in\N,\\
\Rep\lambda_n>-bM(|\Imp\lambda_n|),\quad  n\in\N,\quad \text{and}\\
\lambda_0:=0,\quad |\lambda_n|>\max\bigl[n,|\lambda_{n-1}|\bigr],\quad n\in\N.
\end{gathered}
\end{equation*}
The latter, in particular, indicates that the points $\lambda_n$ are {\it
distinct}
\begin{equation*}
\lambda_i \neq \lambda_j,\quad i\neq j.
\end{equation*}
Since each set
\begin{equation*}
\left\{ \lambda \in {\mathbb C} \big| \Rep\lambda > -bM(|\Imp\lambda|),
\ |\lambda|>\max\left[n,|\lambda_{n-1}|\right] \right\},\quad n\in\N,
\end{equation*}
is {\it open} in ${\mathbb C}$, there exists such an $\varepsilon_n>0$ that, along with the point $\lambda_n$, the set contains the {\it open disk}
\begin{equation*}
\Delta_n=\left\{ \lambda \in {\mathbb C} \big||\lambda-\lambda_n|<\varepsilon_n \right\},
\end{equation*}
i.e., for any $\lambda \in \Delta_n$,
\begin{equation}\label{disks1}
\Rep\lambda>-bM(|\Imp\lambda|)\quad \text{and}\quad |\lambda|>\max\bigl[n,|\lambda_{n-1}|\bigr].
\end{equation}
The radii of the disks $\varepsilon_n$ can be chosen small enough so that
\begin{equation}\label{radii1}
0<\varepsilon_n<1/n,\quad n\in\N,\quad
\text{and}\quad \Delta_i \cap \Delta_j=\emptyset,\quad  i\neq j,
\end{equation}
i.e., the disks are {\it pairwise disjoint}.

Considering that each $\Delta_n \cap \sigma(A)\neq \emptyset$, $\Delta_n$ being an {\it open set}, by the properties of the {\it s.m.} and the latter, we infer
\begin{equation}\label{perp1}
E_A(\Delta_n)\neq 0,\quad n\in\N,\quad
\text{and}\quad E_A(\Delta_i)E_A(\Delta_j)=\delta_{ij}E_A(\Delta_i),
\end{equation}
($\delta_{ij}$ is {\it Kronecker's delta symbol} and $0$, here and whenever appropriate, designates the \textit{zero operator}).
Hence, the subspaces $E_A(\Delta_n)X$ are \textit{nontrivial} and
\begin{equation*}
E_A(\Delta_i)X\cap E_A(\Delta_j)X=\left\{0\right\},\quad i\neq j.
\end{equation*}
Thus, choosing vectors
\begin{equation}\label{vecseq}
e_n\in E_A(\Delta_n)X,\quad n\in\N,\quad \text{with}\quad \|e_n\|=1,
\end{equation}
we obtain a vector sequence $\left\{e_n\right\}_{n=1}^\infty$ such that, by \eqref{perp1},
\begin{equation}\label{ortho1}
E_A(\Delta_i)e_j=\delta_{ij}e_i.
\end{equation}
The latter, showing the \textit{linear independence} of $\left\{e_1,e_2,\dots\right\}$, goes a step beyond implying the existence of an $\varepsilon>0$ such that
\begin{equation}\label{dist1}
d_n:=\dist\left(e_n,\spa\left(\left\{e_i\big|i\in\N,\ i\neq n\right\}\right)\right)\ge \varepsilon,\quad n\in\N.
\end{equation}
Otherwise, there is a vanishing subsequence
$\left\{d_{n(k)}\right\}_{k=1}^\infty$
\begin{equation*}
d_{n(k)}\to 0\quad \text{as}\quad k\to\infty,
\end{equation*}
and hence, for any $k\in\N$, there exists an
\begin{equation*}
f_{n(k)}\in \spa\left(\left\{e_i\big|i\in\N,\ i\neq n(k)\right\}\right)\quad
\text{with}\quad
\|e_{n(k)}-f_{n(k)}\|<d_{n(k)}+1/n(k),
\end{equation*}
which, considering \eqref{bounded}, implies
\begin{equation*}
e_{n(k)}=E_A(\Delta_{n(k)})[e_{n(k)}-f_{n(k)}]\to 0\quad \text{as}\quad k\to\infty
\end{equation*}
contradicting \eqref{vecseq}.

As follows from the {\it Hahn-Banach Theorem} (see, e.g., \cite{Dun-SchI}), \eqref{dist1} implies that,
for each $n\in\N$, there is an $e^*_n\in X^*$ such that
\begin{equation}\label{H-B}
\|e_n^*\|=1\quad\text{and}\quad \langle e_i,e_j^*\rangle=\delta_{ij}d_i.
\end{equation}
For the sequence of the real parts $\{\Rep\lambda_n\}_{n=1}^\infty$, there are the two
possibilities
\begin{equation*}
\sup_{n\in \N}|\Rep\lambda_n|<\infty\quad\text{or}\quad
\sup_{n\in \N}|\Rep\lambda_n|=\infty.
\end{equation*}
Suppose that
\begin{equation}\label{bounded1}
\sup_{n\in \N}|\Rep\lambda_n|=:\omega<\infty.
\end{equation}
Let
\begin{equation*}
f:=\sum_{n=1}^\infty \dfrac{1}{n^2}e_{n}\in X
\quad \text{and}\quad
g^*:=\sum_{n=1}^\infty \dfrac{1}{n^2}e_{n}^*\in X^*,
\end{equation*}
the series strongly converging in $X$ and $X^*$, respectively, due to \eqref{vecseq} and \eqref{H-B}.
By \eqref{H-B} and \eqref{dist1},
\begin{equation}\label{dist2}
\langle e_n,g^*\rangle=\dfrac{1}{n^2}\langle e_n,e_{n}^*\rangle=\dfrac{d_n}{n^2}
\ge \dfrac{\varepsilon}{n^2},\quad n\in\N.
\end{equation}
As can be easily deduced from \eqref{ortho1},
\begin{equation}\label{vectors1}
E_A(\Delta_n)f=\frac{1}{n^2}e_n,\quad n\in\N,\quad \text{and}\quad E_A(\cup_{n=1}^\infty\Delta_n)f=f.
\end{equation}
Considering the latter and \eqref{dist2},
\begin{equation}\label{vt}
v(f,g^*,\Delta_n)\ge |\langle E_A(\Delta_n)f,g^* \rangle|
=\left\langle \frac{1}{n^2}e_n,g^* \right\rangle
\ge\dfrac{\varepsilon}{n^4},\quad n\in\N.
\end{equation}
For $s=t=1$, we have
\begin{multline}\label{similar}
\int_{\sigma(A)} T(|\lambda|)e^{\Rep\lambda}\,dv(f,g^*,\lambda)
\hfill \text{by \eqref{vectors1};}
\\
\shoveleft{
=
\int_{\sigma(A)} T(|\lambda|)e^{\Rep\lambda}
\,d v(E_A(\cup_{n=1}^\infty\Delta_n)f,g^*,\lambda)
\hfill \text{by the properties of the {\it o.c.};}
}\\
\shoveleft{
=\int_{\cup_{n=1}^\infty\Delta_n}T(|\lambda|)e^{\Rep\lambda}\,dv(f,g^*,\lambda)
=\sum_{n=1}^\infty\int_{\Delta_n}
T(|\lambda|)e^{\Rep\lambda}\,dv(f,g^*,\lambda)
}\\
\hfill
\text{for $\lambda\in \Delta_n$,
by \eqref{disks1}, \eqref{bounded1}, and \eqref{radii1}:}\quad
|\lambda|\ge n\ \text{and}
\\
\hfill
\Rep\lambda=\Rep\lambda_n
-(\Rep\lambda_n-\Rep\lambda)
\ge \Rep\lambda_n-|\lambda_n-\lambda|
\ge-\omega-\varepsilon_n\ge-\omega-1;
\end{multline}
\begin{multline*}	
\ge \sum_{n=1}^\infty T(n)e^{-(\omega+1)}v(f,g^*,\Delta_n)
\hfill
\text{by \eqref{vt};}
\\
\ge
e^{-(\omega+1)}\sum_{n=1}^\infty T(n)\frac{\varepsilon}{n^4}=\infty.
\end{multline*}
Indeed, by definition \eqref{T},
\begin{equation*}
T(n)\ge m_0\dfrac{n^4}{m_4},\quad n\in\N.
\end{equation*}
Hence, by {\cite[Proposition $3.1$]{Markin2002(1)}},
\begin{equation*}
e^{tA}f\bigr|_{t=1}\not\in D(T(|A|)).
\end{equation*}
Considering \eqref{CC}, the more so,
\begin{equation*}
e^{tA}f\bigr|_{t=1}\not\in \bigcap_{s>0}D(T(s|A|))= C_{(m_n)}(A).
\end{equation*}
Hence, according to Proposition \ref{prop},
\begin{equation*}
e^{\cdot A}f\not\in C_{(m_n)}\left((0,\infty),X\right),
\end{equation*} 
which implies that the $C_0$-semigroup $\left\{e^{tA}\big|t\ge 0\right\}$ generated by $A$ is not a $C_{(m_n)}$-semi\-group.
Suppose that
\begin{equation*}
\sup_{n\in \N}|\Rep\lambda_n|=\infty
\end{equation*}
and recall that we are also acting under hypothesis \eqref{upbound}.
Hence, there is a sub\eqref{vecseq} and \eqref{H-B}sequence
$\left\{\Rep\lambda_{n(k)}\right\}_{k=1}^\infty$ such that
\begin{equation}\label{-infinity}
\Rep\lambda_{n(k)} \le -k,\quad k\in\N.
\end{equation}
Let
\begin{equation*}
f:=\sum_{k=1}^\infty \frac{1}{k^2}e_{n(k)}\in X
\quad \text{and}\quad
g^*:=\sum_{k=1}^\infty \frac{1}{k^2}e_{n(k)}^*\in X^*,
\end{equation*}
the series strongly converging in $X$ and $X^*$, respectively, due to \eqref{vecseq} and \eqref{H-B}.
By \eqref{H-B} and \eqref{dist1},
\begin{equation}\label{dist3}
\langle e_{n(k)},g^*\rangle=\dfrac{1}{k^2}\langle e_{n(k)},e_{n(k)}^*\rangle=\dfrac{d_{n(k)}}{k^2}
\ge \dfrac{\varepsilon}{k^2},\quad k\in\N.
\end{equation}
By \eqref{ortho1},
\begin{equation}\label{subvectors2}
E_A(\Delta_{n(k)})f=\frac{1}{k^2}e_{n(k)},\quad k\in\N,\quad\text{and}\quad E_A(\cup_{k=1}^\infty\Delta_{n(k)})f=f.
\end{equation}
Considering the latter and \eqref{dist3},
\begin{equation}\label{vt2}
v(f,g^*,\Delta_{n(k)})\ge |\langle E_A(\Delta_{n(k)})f,g^* \rangle|
=\left\langle \frac{1}{k^2}e_{n(k)},g^* \right\rangle
\ge\dfrac{\varepsilon}{k^4},\quad k\in\N.
\end{equation}
Similarly to \eqref{similar}, for $s=1$ and $t=(2b)^{-1}$,
\begin{equation*}
\int_{\sigma(A)} T(|\lambda|)e^{(2b)^{-1}\Rep\lambda}\,dv(f,g^*,\lambda)
=\sum_{k=1}^\infty\int_{\Delta_{n(k)}}
T(|\lambda|)e^{(2b)^{-1}\Rep\lambda}\,dv(f,g^*,\lambda)=\infty.
\end{equation*}
Indeed, for $\lambda \in \Delta_{n(k)}$, $k\in\N$, by
\eqref{disks1}, \eqref{radii1}, and \eqref{-infinity},
\begin{multline*}
-bM(|\Imp\lambda|)<\Rep\lambda =\Rep\lambda_{n(k)}-(\Rep\lambda_{n(k)}-\Rep\lambda)\le
\Rep\lambda_{n(k)}+|\lambda_{n(k)}-\lambda|\\
\le \Rep\lambda_{n(k)}+\varepsilon_{n(k)}\le-k+1\le 0
\end{multline*}
and hence,
\begin{equation*}
\Rep\lambda\le -k+1\le 0 \quad \text{and}\quad |\lambda|\ge|\Imp\lambda|
\ge M^{-1}\left(b^{-1}[-\Rep\lambda]\right).
\end{equation*}
Using these estimates, for $k\in\N$, we have
\begin{multline*}
\int_{\Delta_{n(k)}}T(|\lambda|)e^{(2b)^{-1}\Rep\lambda}\,dv(f,g^*,\lambda)
\ge \int_{\Delta_{n(k)}}e^{M(|\lambda|)}e^{(2b)^{-1}\Rep\lambda}\,dv(f,g^*,\lambda)
\\
\shoveleft{
\ge \int_{\Delta_{n(k)}}
e^{M(M^{-1}(b^{-1}[-\Rep\lambda]))}e^{(2b)^{-1}\Rep\lambda}\,dv(f,g^*,\lambda)
}\\
\shoveleft{
=\int_{\Delta_{n(k)}}
e^{b^{-1}[-\Rep\lambda]}e^{(2b)^{-1}\Rep\lambda}\,dv(f,g^*,\lambda)
=\int_{\Delta_{n(k)}}
e^{(2b)^{-1}[-\Rep\lambda]}\,dv(f,g^*,\lambda)
}\\
\shoveleft{
\ge
e^{(2b)^{-1}(k-1)}v(f,g^*,\Delta_{n(k)})
\hfill
\text{by \eqref{vt2};}
}\\
\ge e^{(2b)^{-1}(k-1)}\dfrac{\varepsilon}{k^4}\to \infty\ \text{as $k\to\infty$}.
\end{multline*}
Thus, by {\cite[Proposition $3.1$]{Markin2002(1)}},
\begin{equation*}
e^{tA}f\bigr|_{t=(2b)^{-1}}\not\in D(T(|A|)).
\end{equation*}
Considering \eqref{CC}, the more so,
\begin{equation*}
e^{tA}f\bigr|_{t=(2b)^{-1}}\not\in \bigcap_{s>0}D(T(s|A|))
=C_{(m_n)}(A).
\end{equation*}
Hence, according to Proposition \ref{prop},
\begin{equation*}
e^{\cdot A}f\not\in C_{(m_n)}\left((0,\infty),X\right),
\end{equation*} 
which implies that the $C_0$-semigroup $\left\{e^{tA}\big|t\ge 0\right\}$ generated by $A$ is not a
$C_{(m_n)}$-semi\-group.

This concludes the analysis of all the possibilities and thus, the proof of the "only if" part by {\it contrapositive}.

By {\cite[Theorem 3.1]{Markin2015(1)}}, the function $T(\cdot)$ defined by \eqref{T} can be replaced with any {\it nonnegative}, {\it continuous}, and {\it increasing} on $[0,\infty)$ function $F(\cdot)$ satisfying \eqref{replace}.
\end{proof}

\section{Gevrey ultradifferentiable $C_0$-semigroups}

\begin{defn}
Let $\beta\ge 0$. We shall call a $C_0$-semigroup $\left\{S(t)\big|t\ge0\right\}$ in a complex Banach space $(X,\|\cdot\|)$ a Roumieu (Beurling) type Gevrey ultradifferentiable $C_0$-semigroup of order $\beta$, or a ${\mathcal E}^{\{\beta\}}$-semigroup (${\mathcal E}^{(\beta)}$-semigroup), if it is a $C_{\{[n!]^\beta\}}$-semigroup ($C_{([n!]^\beta)}$-semigroup, respectively) in accordance with Definition \ref{def1}.
\end{defn}
The sequence $m_n=[n!]^\beta$ with $\beta\ge 1$ satisfying the conditions {\bf (GR)} and {\bf (SBC)}
\cite{Markin2008} and the function $T(\cdot)$ being replaceable with $F(\lambda)=e^{\lambda^{1/\beta}}$,
 $\lambda\ge 0$, (see \cite{Markin2004(2)} for details), {\cite[Theorem $5.1$]{Markin2004(1)}}
 giving a characterization of the scalar type spectral generators of Roumieu type Gevrey ultradifferentiable
 $C_0$-semigroups of order $\beta\ge 1$ (in particular, for $\beta=1$, of \textit{analytic semigroups}
 \cite{Engel-Nagel,Hille-Phillips,Markin2002(2)}) immediately follows from Theorem \ref{thm}.

The sequence $m_n=[n!]^\beta$ with $\beta>1$
satisfying the conditions {\bf (SGR)} and {\bf (BC)} (see \cite{Markin2009} for details), in the same manner, a ready consequence of Theorem \ref{GBTCUS} is the following

\begin{cor}
Let $\beta>1$. Then a scalar type spectral operator $A$ in a complex Banach space $(X,\|\cdot\|)$ generates a ${\mathcal E}^{(\beta)}$-semigroup iff, for any $b>0$, there is an $a\in \R$ such that
\begin{equation*}
\Rep\lambda\le a-b|\Imp\lambda|^{1/\beta},\quad \lambda\in\sigma(A).
\end{equation*}
\end{cor}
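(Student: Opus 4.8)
The plan is to derive the Corollary directly from Theorem~\ref{GBTCUS} by verifying its hypotheses for the specific sequence $m_n=[n!]^\beta$ with $\beta>1$ and then identifying the associated function $M(\cdot)$ up to the equivalence \eqref{replace}. First I would record that, as established in \cite{Markin2009}, the sequence $m_n=[n!]^\beta$ satisfies both \textbf{(SGR)} and \textbf{(BC)} precisely because $\beta>1$: the factor $[n!]^{\beta-1}$ grows fast enough to absorb any $\alpha^n$ (giving \textbf{(SGR)}), while the binomial-type sum inherits a lower bound of the form $lh^n$ from the $\beta=1$ case combined with the super-multiplicativity coming from the extra exponent (giving \textbf{(BC)}). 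This places us squarely in the scope of Theorem~\ref{GBTCUS}.

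Next I would pin down the function $F(\cdot)$ that replaces $T(\cdot)=\ln^{-1}$-wise $M(\cdot)$. For $m_n=[n!]^\beta$ one has, by a standard computation with the defining series \eqref{T} or with the associated function $S(\lambda)=m_0\sup_{n\ge0}\lambda^n/m_n$, the two-sided estimate
\begin{equation*}
c_1 e^{\gamma_1\lambda^{1/\beta}}\le T(\lambda)\le c_2 e^{\gamma_2\lambda^{1/\beta}},\quad \lambda\ge R,
\end{equation*}
for suitable positive constants; indeed $\sup_{n\ge0}\lambda^n/[n!]^\beta=\bigl(\sup_{n\ge0}(\lambda^{1/\beta})^n/n!\bigr)^\beta$ and the inner supremum is comparable to $e^{\lambda^{1/\beta}}$. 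This is exactly the content of the parenthetical remark in the paper citing \cite{Markin2004(2)}. Hence \eqref{replace} holds with $F(\lambda)=e^{\lambda^{1/\beta}}$, so in the conclusion of Theorem~\ref{GBTCUS} we may take $M(\lambda)$ to be $\ln F(\lambda)=\lambda^{1/\beta}$.

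Finally I would substitute: Theorem~\ref{GBTCUS} says $A$ generates a $C_{([n!]^\beta)}$-semigroup, i.e.\ by definition a ${\mathcal E}^{(\beta)}$-semigroup, iff for every $b>0$ there is $a\in\R$ with $\Rep\lambda\le a-bM(|\Imp\lambda|)$ on $\sigma(A)$, and with $M(\lambda)=\lambda^{1/\beta}$ this reads $\Rep\lambda\le a-b|\Imp\lambda|^{1/\beta}$ on $\sigma(A)$, which is the asserted criterion. I do not expect any genuine obstacle here; the only point requiring a line of care is that replacing $M$ by $\lambda^{1/\beta}$ is legitimate only up to the multiplicative/dilation equivalence in \eqref{replace}, but since the statement quantifies over all $b>0$, absorbing constants $\gamma_1,\gamma_2$ into $b$ and additive constants into $a$ shows the two formulations are equivalent. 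Thus the Corollary follows immediately.
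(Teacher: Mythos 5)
Your proposal is correct and follows the paper's own route exactly: the paper likewise derives the Corollary by noting that $m_n=[n!]^\beta$ with $\beta>1$ satisfies \textbf{(SGR)} and \textbf{(BC)} (citing \cite{Markin2009}), that $T(\cdot)$ is replaceable by $F(\lambda)=e^{\lambda^{1/\beta}}$ so that $M(\lambda)$ becomes $\lambda^{1/\beta}$ (citing \cite{Markin2004(2)}), and then applying Theorem \ref{GBTCUS}. Your added remark that the constants $\gamma_1,\gamma_2,c_1,c_2$ from \eqref{replace} are absorbed because the criterion quantifies over all $b>0$ is a correct and worthwhile detail the paper leaves implicit.
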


Observe that, for $0\le \beta\le 1$, the sequence $m_n=[n!]^\beta$ fails to satisfy the condition {\bf (SGR)},
and, for $0\le \beta<1$, even {\bf (GR)}. If a scalar type spectral operator $A$ in a complex Banach space $(X,\|\cdot\|)$ generates a ${\mathcal E}^{\{\beta\}}$-semigroup with $0\le \beta<1$ or a ${\mathcal E}^{(\beta)}$-semigroup with $0\le \beta\le 1$,
due to inclusions \eqref{incl1} and \eqref{incl2},
\begin{equation*}
{\mathcal E}^{(\beta)}((0,\infty),X)
\subseteq {\mathcal E}^{\{\beta\}}((0,\infty),X)
\subseteq {\mathcal E}^{(1)}((0,\infty),X),
\end{equation*}
which implies that all the orbits $e^{\cdot A}f$, $f\in X$, are entire vector functions. Hence, being defined of the whole space $X$, $A\in L(X)$ by the \textit{Closed Graph Theorem} and generates a uniformly continuous semigroup (an \textit{entire semigroup of exponential type}).

\section{One more example}

The rapidly growing sequence $m_n:=e^{n^2}$ also satisfies the conditions {\bf (SGR)} and {\bf (BC)} and the function $M(\cdot)$ in this case can be replaced with
\begin{equation*}
L(\lambda):=\begin{cases}
0& \text{for $0\le\lambda<1$},\\
[\ln\lambda]^2& \text{for $\lambda\ge 1$}
\end{cases}
\end{equation*}
(see \cite{Markin2009} for details). Thus we have the following

\begin{cor}
A scalar type spectral operator $A$ in a complex Banach space $(X,\|\cdot\|)$ generates a $C_{(e^{n^2})}$-semigroup iff, for any $b>0$, there is an $a\in \R$ such that
\begin{equation*}
\Rep\lambda\le a-bL(|\Imp\lambda|),\quad \lambda\in\sigma(A).
\end{equation*}
\end{cor}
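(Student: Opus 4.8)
The plan is to derive this corollary from Theorem \ref{GBTCUS} exactly as the preceding corollary is derived from it, by simply identifying the auxiliary function $F(\cdot)$ appropriate to the defining sequence $m_n = e^{n^2}$. Since the statement being proved is the $C_{(e^{n^2})}$-semigroup criterion, the only thing to verify is that the hypotheses of Theorem \ref{GBTCUS} are met and that $M(\cdot)$ may legitimately be replaced by $L(\cdot)$.

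First I would invoke the fact, quoted in the excerpt from \cite{Markin2009}, that the sequence $m_n = e^{n^2}$ satisfies both conditions {\bf (SGR)} and {\bf (BC)}; this is precisely what makes Theorem \ref{GBTCUS} applicable to the scalar type spectral operator $A$. Thus $A$ generates a $C_{(e^{n^2})}$-semigroup if and only if, for every $b>0$, there is an $a\in\R$ with $\Rep\lambda \le a - bM(|\Imp\lambda|)$ for all $\lambda\in\sigma(A)$, where $M(\lambda) = \ln T(\lambda)$ and $T(\cdot)$ is the Mandelbrojt function \eqref{T} associated to $m_n = e^{n^2}$.

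Next I would justify replacing $M(\cdot)$ by $L(\cdot)$. By the last clause of Theorem \ref{GBTCUS}, $T(\cdot)$ may be replaced by any nonnegative, continuous, increasing function $F(\cdot)$ on $[0,\infty)$ satisfying the two-sided estimate \eqref{replace}; equivalently, $M(\cdot) = \ln T(\cdot)$ may be replaced by $\ln F(\cdot)$. Taking $F(\lambda) = e^{L(\lambda)}$, so that $\ln F(\lambda) = L(\lambda)$, the required comparison $c_1 F(\gamma_1\lambda)\le T(\lambda)\le c_2 F(\gamma_2\lambda)$ for large $\lambda$ is exactly the estimate established in \cite{Markin2009}; indeed a routine computation of $T(\lambda) = m_0\sum_n \lambda^n e^{-n^2}$ by the Laplace method shows $\ln T(\lambda) \asymp [\ln\lambda]^2/4$ as $\lambda\to\infty$, which is of the form $L(\gamma\lambda)$ up to bounded additive and multiplicative perturbations, so \eqref{replace} holds. (One also notes $F$ is nonnegative, continuous on $[0,\infty)$, and increasing, as required.) Substituting this $F$ into the criterion yields the asserted condition $\Rep\lambda\le a - bL(|\Imp\lambda|)$, $\lambda\in\sigma(A)$.

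The step that carries the real content is the asymptotic comparison of $\ln T(\lambda)$ with $[\ln\lambda]^2$ that underlies the admissibility of $L(\cdot)$ in \eqref{replace}; however, since this is precisely the computation recorded in \cite{Markin2009} and cited in the paragraph preceding the corollary, in the write-up it suffices to point to that reference rather than reproduce the estimate. With that in hand the corollary is an immediate specialization of Theorem \ref{GBTCUS}, exactly parallel to the derivation of the Gevrey corollary from the same theorem, and no further argument is needed.
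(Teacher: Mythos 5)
Your proposal matches the paper's own derivation: the paper likewise disposes of this corollary in the single sentence preceding it, citing \cite{Markin2009} for the facts that $m_n=e^{n^2}$ satisfies {\bf (SGR)} and {\bf (BC)} and that $M(\cdot)$ may be replaced by $L(\cdot)$, and then reads the statement off Theorem \ref{GBTCUS} exactly as you do. One small caveat on your justification of the replacement: since $\ln T(\lambda)\sim\tfrac14[\ln\lambda]^2$ while $\ln F(\gamma\lambda)=[\ln(\gamma\lambda)]^2\sim[\ln\lambda]^2$, the dilation $\gamma$ in \eqref{replace} cannot absorb the factor $\tfrac14$, so $F=e^{L}$ does not literally satisfy \eqref{replace}; what actually makes $L$ admissible is that the criterion quantifies over all $b>0$, so a multiplicative constant on $M$ is harmless---but this is precisely the level of detail the paper itself delegates to \cite{Markin2009}.
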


\noindent
(cf. {\cite[Theorem 4.1]{Markin2004(1)}}).
\section{Final remark}

Due to the scalar type spectrality of the operator $A$, Theorem \ref{GBTCUS} is void of restrictions on its
resolvent behavior, which appear to be inevitable for the results of this nature in the general case
(cf. \cite{Engel-Nagel,Hille-Phillips,Pazy}).


{\it{Acknowledgments}}. The author would like to express appreciation to his colleagues at the Department of Mathematics and the College of Science and Mathematics at California State University, Fresno for being a vibrant and stimulating scholarly community.


\end{document}